\theoremstyle{plain} 
\newtheorem*{mainthm}{Main Theorem}
\newtheorem{lemma}{Lemma}[section]
\newtheorem{prop}[lemma]{Proposition}
\newtheorem{claim}{Claim}
\newtheorem{thm}[lemma]{Theorem}
\theoremstyle{definition}
\newtheorem{example}[lemma]{Example}
\theoremstyle{remark}
\newcommand{\st}{\::\:}
\newcommand{\rrestriction}{\!\!\restriction}
\title{On hopfian cofinite subsemigroups}
\author{Victor Maltcev and N. Ru\v{s}kuc}
\begin{document}

\maketitle

\begin{abstract}
If a finitely generated semigroup $S$ has a hopfian 
(meaning: every surjective endomorphism is an automorphism) cofinite subsemigroup $T$
then $S$ is hopfian too. 
This no longer holds if $S$ is not finitely generated. There exists a finitely generated hopfian semigroup $S$ with a non-hopfian subsemigroup $T$ such that $S\setminus T$ has size $1$.

\bigskip

\noindent
\textit{2000 Mathematics Subject Classification}: 20M05.

\bigskip

\noindent
\textit{Keywords:} Hopfian semigroup, finitely generated semigroup, cofinite subsemigroup, Rees index.
\end{abstract}

\section{Introduction and the statement of main result} 
\label{sec_intro}

An algebraic structure $A$ is said to be \emph{hopfian} if no proper quotient of $A$ is isomorphic to $A$, or, equivalently, if
every surjective endomorphism of $A$ is an automorphism.
Hopficity is clearly a finiteness condition (i.e. all finite algebraic structures are hopfian), and so the question arises of its preservation under substructures that are in some sense `large', or extensions that are in some sense `small'.

The property was introduced into literature by Hopf \cite{ hopf31} who asked if every finitely generated group  was hopfian.
The group defined by the presentation
$\langle a,b\:|\: a^{-1}b^2a=\nobreak b^3\rangle$
 is a simple (but not first) counter-example; see \cite{BS}.
In the same paper the authors show that the group $\langle a,b \:|\: a^{-1}b^{12}a=b^{18}\rangle$ is hopfian, but
contains a normal non-hopfian subgroup of index $6$. 
In particular,
hopficity is not preserved by subgroups of finite index, even in the
finitely generated case. 
By way of contrast, Hirshon \cite{Hirshon}  proved
that if $H$ is a hopfian  subgroup of
finite index in a finitely generated group $G$, then $G$ is hopfian
as well. 
To the best of our knowledge, it is still open if
the same statement holds without finite generation assumption.

It has been known for a while that in semigroups processes of taking cofinite subsemigroups, and conversely extending by finite sets, display strong analogies with taking subgroups and extensions of finite index in group theory. To emphasise this analogy the term \emph{Rees index} has been introduced; it is simply the size of the complement of the subsemigroup in its parent semigroup.
Even though to have finite Rees index is a fairly restrictive property, it occurs naturally in semigroups (for instance all ideals in the additive semigroup of positive integers are of finite Rees index), and it has provided a fertile ground for research,  throwing up a few surprises along the way. In \cite{Nik} it is proved that the main combinatorial finiteness conditions, such as finite generation, presentability, and solvability of the word problem, are all preserved both under finite Rees subsemigroups and extensions.
The proof for finite presentability is surprisingly complicated, and to date no fundamentally simpler proof has been found.
The related question of the existence of finite complete rewriting system has been settled only very recently, see \cite{wongta}.
Some related cohomological finiteness conditions are considered in \cite{malheiro09} and \cite{wang98}, 
residual finiteness is treated in \cite{Rick}, and some surprising behaviour in relation to the ideal structure is exhibited in \cite{James}.

The purpose of this paper is to demonstrate that the situation for semigroups in relation to the hopfian property is analogous to the above described situation for groups.
Specifically, we prove:

\begin{mainthm}\label{thm_main}
Let $S$ be a finitely generated semigroup, and let $T$ be a subsemigroup with $S\setminus T$ finite.
If $T$ is hopfian then $S$ is hopfian as well.
\end{mainthm}

Accompanying the Main Theorem are two examples, establishing the following:

\begin{itemize}[partopsep=0mm,topsep=2mm,itemsep=1mm,partopsep=0mm]
\item
if finite generation assumption is removed, the Main Theorem no longer holds; and
\item
hopficity is not necessarily inherited by cofinite subsemigroups, not even in the finitely generated case.
\end{itemize}

The Main Theorem is proved in Section \ref{sec_proof}, although the brunt of the work goes into proving a result concerning cofinite subsemigroups and endomorphisms in Section \ref{sec_reesendo}.
The accompanying examples are exhibited before and after the proof, in Sections \ref{sec_introex} and \ref{sec_concludeex} respectively.
The final section contains some further commentary and open problems.

\section{An introductory example}
\label{sec_introex}

The purpose of this section is to show that, without adding the finite generation assumption, hopficity is not preserved by either finite Rees index extensions or subsemigroups.

We begin by defining a family of isomorphic semigroups $T_i=\langle b_i\:|\: b_i^2=b_i^4\rangle$, $i\in\mathbb{N}$.
Form their union $T=\bigcup_{i\in \mathbb{N}} T_i$, and extend the multiplication defined on each $T_i$ to a multiplication on the whole of $T$ by letting
$xy=yx=y$ for any $x\in T_i$, $y\in T_j$, $i<j$.
It is easy to see that this turns $T$ into a semigroup.

Further, let $F$ be the semigroup $\langle a \:|\: a^5=a^2\rangle$, let $S=T\cup F$, and extend the multiplication on $T$ and $F$ to a multiplication on the whole of $S$ by $xy=yx=y$ for $x\in F$, $y\in T$.
Again, this turns $S$ into a semigroup.
Finally, let $S^1$ be the semigroup $S$ with an identity adjoined to it.
Clearly we have $T\leq T^1\leq S^1$, a sequence of finite Rees index extensions.

\begin{prop}
\label{example1}
The semigroups $S^1$ and $T$ are hopfian, while the semigroup $T^1$ is not. Hence, hopficity is preserved by neither finite Rees index extensions nor subsemigroups
\end{prop}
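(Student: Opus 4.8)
The plan is to treat the three assertions separately; the failure of hopficity for $T^1$ is a one-line explicit construction, while the bulk of the work is the hopficity of $T$ and then of $S^1$, which uses the hopficity of $T$ as an input.

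\textbf{Non-hopficity of $T^1$.} I would exhibit the surjective endomorphism $\psi\colon T^1\to T^1$ given by $\psi(1)=1$, $\psi(b_1)=1$, and $\psi(b_i)=b_{i-1}$ for $i\geq 2$ (extended multiplicatively, so $\psi(b_i^k)=b_{i-1}^k$ for $i\geq 2$ and $\psi(b_1^k)=1$). Checking that $\psi$ respects the multiplication is a short case analysis over the defining rules ($xy=y$ for $x\in T_i$, $y\in T_j$ with $i<j$; the isomorphisms $T_i\cong T_{i-1}$; and the fact that $1$ is an identity), and $\psi$ is visibly onto but identifies $b_1$ with $1$. The feature that makes this work, and that will recur, is that the target of $b_1$ (the adjoined identity) interacts with the rest of $T^1$ from below exactly as $b_1$ does, whereas no such element exists inside $T$, and inside $S^1$ the relation $a\cdot 1=a\neq 1$ blocks the analogous collapse.

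\textbf{Hopficity of $T$.} Let $\psi\colon T\to T$ be a surjective endomorphism; I will show $\psi(b_i)=b_i$ for all $i$ by induction on $i$, whence $\psi=\mathrm{id}_T$. For the base case, $b_1$ is the unique element of $T\setminus T^2$ (every $b_i$ with $i\geq 2$ equals $b_1b_i$, all $b_i^2,b_i^3$ are obviously products, and $b_1$ is indecomposable by a direct check against the multiplication rules), so surjectivity forces $\psi(b_1)=b_1$, hence $\psi$ fixes $T_1$ pointwise. For the inductive step, assuming $\psi$ fixes $T_1\cup\dots\cup T_{i-1}$ pointwise: the relation $b_{i-1}b_i=b_i$ together with the absorption rules pins $\psi(b_i)$ to lie in some $T_n$ with $n\geq i$; then, taking a preimage of $b_i$ under $\psi$ and using the induction hypothesis (which fixes the subscript of $\psi(b_\ell)$ at $\ell$ for $\ell<i$) together with $b_ib_m=b_m$ rules out $n>i$; and a final application of the same idea inside $T_i$ forces $\psi(b_i)=b_i$ rather than $b_i^2$ or $b_i^3$. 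The main obstacle here is performing this bookkeeping without ever invoking injectivity of $\psi$; the relations $b_jb_k=b_{\max(j,k)}$ turn out to be exactly rigid enough for surjectivity to propagate upward.

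\textbf{Hopficity of $S^1$.} Let $\phi\colon S^1\to S^1$ be a surjective endomorphism. Since $\phi(1)$ must be an identity, $\phi(1)=1$. The crux is to show $\phi$ maps each of the blocks $T$, $F$, $\{1\}$ into itself. For $F$: choosing $p$ with $\phi(p)=a^4$, I argue $p\notin T$ (otherwise, letting $e_j$ be the idempotent of $\langle p\rangle$, one gets $\phi(e_j)=a^3$, hence $\phi(b_j)^2=a^3$, hence $\phi(b_j)=a^3$ since $a^3$ is its own unique square root, hence $\phi(p)=a^3\neq a^4$) and $p\neq 1$, so $p\in F$; then $a^4\in\langle\phi(a)\rangle$ forces $\phi(a)\in\{a,a^2,a^4\}$, so $\phi(a^3)=a^3$ and $\phi(F)\subseteq F$. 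For $T$: I use the internal description $T=\{x\in S^1:\text{the idempotent of }\langle x\rangle\text{ lies in }E(S^1)\setminus\{1,a^3\}\}$, and rule out $\phi(e_j)\in\{1,a^3\}$ using relations such as $e_jb_k=e_j$ and $ab_k=b_k$ (here the presence of $a$, with $\phi(a)\in F$, is exactly what forbids $\phi$ on $S^1$ from behaving like the collapsing endomorphism of $T^1$); this gives $\phi(T)\subseteq T$. Once the three disjoint blocks are each mapped into themselves, surjectivity forces $\phi(T)=T$ and $\phi(F)=F$, so $\phi|_T$ is an automorphism of $T$ by the previous part, $\phi|_F$ is an automorphism of the finite semigroup $F$, and disjointness of the blocks upgrades this to injectivity of $\phi$ on all of $S^1$. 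As before, the hard part is the block-preservation step, which must be done purely from surjectivity.
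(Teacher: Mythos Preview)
Your proposal is correct and follows the same overall architecture as the paper: exhibit the shift endomorphism for $T^1$; prove every surjective endomorphism of $T$ is the identity by induction starting from the indecomposable $b_1$; then show any surjective endomorphism of $S^1$ preserves the blocks $\{1\}$, $F$, $T$ and invoke the hopficity of $T$.

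Where you diverge is in the \emph{mechanism} for the hopficity steps. For $T$, the paper avoids your subscript-chasing by observing once and for all that $T\setminus T_1$ is exactly the set of elements on which $b_1$ acts trivially; since $\phi(b_1)=b_1$, this set is $\phi$-invariant, and it is a subsemigroup isomorphic to $T$, so the whole argument recurses in one line. For $S^1$, the paper replaces your preimage-of-$a^4$ and square-root analysis with two intrinsic characterizations: $a$ is the unique $x\in S^1$ with $\langle x\rangle$ not a group and $x^5=x^2$, while the $b_i$ are the unique $x$ with $\langle x\rangle$ not a group and $x^4=x^2$. These properties are preserved by any endomorphism and force $a\phi^{-1}=\{a\}$ and $\{b_i\}\phi^{-1}=\{b_i\}$ immediately, giving $\phi(T)=T$ without the case analysis. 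Your route works but is noticeably longer; the paper's invariants do the bookkeeping for you.
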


\begin{proof}
\textit{$T$ is hopfian.}
Let $\phi:T\rightarrow T$ be a surjective endomorphism.
Since $b_1$ is the only indecomposable element of $T$ (in the sense that it cannot be written as a product of any two elements of $T$),
we must have $b_1\phi^{-1}=\{ b_1\}$.
In particular $\phi\rrestriction_{T_1}$ is the identity mapping.
The set of elements on which $b_1$ acts trivially (meaning $b_1x=xb_1=x$) is precisely $T\setminus T_1$,
so $\phi$ maps this set onto itself.
But clearly $T\setminus T_1$ is a subsemigroup isomorphic to $T$, and an inductive argument shows that $\phi$ is in fact the identity mapping. Thus $T$ indeed is hopfian.
\medskip

\textit{$T^1$ is not hopfian.}
Indeed, a routine verification shows that the mapping $b_1\mapsto 1$, $b_{n+1}\mapsto b_n$ ($n\in \mathbb{N}$)
extends to a surjective, non-injective endomorphism of $T^1$.
\medskip

\textit{$S^1$ is hopfian.}
Let $\phi:S^1\rightarrow S^1$ be a surjective endomorphism.
Clearly, $1\phi=1$.
Note that $a$ is the only element $x\in S^1$ such that $\langle x\rangle$ is not a group and $x^5=x^2$.
It follows that $a\phi^{-1}=\{a\}$.
Similarly, $b_i$ ($i\in\mathbb{N}$) are the only elements $x\in S^1$ such that $\langle x\rangle$ is not a group and $x^4=x^2$.
Hence $\phi$ maps $\langle b_1,b_2,\dots\rangle=T$ onto itself. We have already proved that $T$ is hopfian, and so it follows that
$\phi$ is a bijection, and so $S^1$ is hopfian, as required.
\end{proof}

\section{Cofinite subsemigroups and endomorphisms}
\label{sec_reesendo}

The following result will be of crucial importance in the proof of the Main Theorem.

\begin{thm}
\label{thm1}
For every endomorphism $\phi$ of a finitely generated semigroup $S$ and every proper cofinite subsemigroup $T$  we have $T\phi\neq S$.
\end{thm}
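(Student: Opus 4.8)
The plan is to argue by contradiction: suppose $T\phi=S$.

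\smallskip

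\noindent\textbf{First observations.} Since $S=T\phi\subseteq S\phi\subseteq S$, the endomorphism $\phi$ is surjective, and then $T\phi^{m}=S$ for every $m\ge 1$ (as $T\phi^{2}=S\phi=S$, and so on). Surjectivity already imposes structure: the set $I=S\setminus S^{2}$ of indecomposable elements is finite (it lies in every generating set of the finitely generated $S$), $\phi$ permutes $I$, and since each $d\in I$ has $d=t\phi$ for some $t\in T$ while every $\phi$-preimage of an element of $I$ again lies in $I$, in fact $I\subseteq T$; iterating this ``depth'' argument along the chain $S\supseteq S^{2}\supseteq S^{3}\supseteq\cdots$ (whose successive differences $S^{k}\setminus S^{k+1}$ are all finite, being contained in the bounded-length elements of $S$) shows that the finite set $S\setminus T$ is contained in $\bigcap_{k}S^{k}$. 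These remarks are suggestive but not decisive on their own --- they are vacuous when $S$ is a group, a case the theorem must cover --- so the proof will rest on a different idea.

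\smallskip

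\noindent\textbf{Main idea.} I will use two facts: (i) every finitely generated residually finite semigroup is hopfian (the Mal'cev-type argument: a surjective endomorphism permutes the finitely many congruences of each fixed finite index, so its kernel congruence is contained in all of them, hence is trivial); and (ii) since $T$ has finite Rees index in the finitely generated semigroup $S$, there is a congruence $\sigma$ on $S$ of \emph{finite} index for which $T$ is a union of $\sigma$-classes (equivalently $S\setminus T$ is). Granting these, let $\rho$ be the intersection of all finite-index congruences of $S$, so $R:=S/\rho$ is finitely generated and residually finite, hence hopfian by (i). As $\phi$ is surjective, the $\phi$-preimage $\{(x,y):(x\phi,y\phi)\in\tau\}$ of any finite-index congruence $\tau$ is again a finite-index congruence; it follows at once that $x\mathrel\rho y$ implies $x\phi\mathrel\rho y\phi$, so $\phi$ induces a surjective endomorphism $\bar\phi$ of $R$. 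By hopficity $\bar\phi$ is an automorphism, which says exactly that the kernel congruence $\kappa$ of $\phi$ (where $x\mathrel\kappa y$ iff $x\phi=y\phi$) satisfies $\kappa\subseteq\rho$. On the other hand $\rho\subseteq\sigma$ by definition of $\rho$, so each $\rho$-class lies inside a single $\sigma$-class, hence inside $T$ or inside $S\setminus T$; thus $\rho$, and therefore the smaller $\kappa$, saturates $T$. But then every $\phi$-fibre $s\phi^{-1}$ lies wholly inside $T$ or wholly inside $S\setminus T$, whence $T=(T\phi)\phi^{-1}=S\phi^{-1}=S$, contradicting $T\subsetneq S$.

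\smallskip

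\noindent\textbf{Expected main obstacle.} The one non-routine ingredient is fact (ii): converting finiteness of the Rees complement $S\setminus T$ into a genuine finite-index congruence that ``sees'' $T$. The obvious candidate --- the syntactic-type congruence $x\mathrel\theta y$ iff for all $p,q\in S^{1}$ the elements $pxq$ and $pyq$ lie on the same side of $T$ and coincide whenever they are outside $T$ --- always saturates $T$ but need not have finite index without extra work, since $S^{1}$ is infinite and an element of $S\setminus T$ may possess factors of unbounded length. Making this congruence (or a suitable variant) finite-index is precisely the delicate bookkeeping characteristic of finite-Rees-index arguments, and this is where the bulk of the effort goes; I would either build such a congruence directly from the finite generating set of $S$ together with the finitely many elements of $S\setminus T$, or extract it from the established Rees-index machinery (cf.\ the treatment of residual finiteness in \cite{Rick}).
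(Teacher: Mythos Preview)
Your approach is genuinely different from the paper's and, modulo the citation you anticipate, it works. The paper argues by a direct, self-contained combinatorial analysis: assuming $T\phi=S$, it replaces $\phi$ by a power $\pi=\phi^{N}$ under which each element of $F=S\setminus T$ either becomes a $\pi$-fixed point in $F$ or is eventually driven into $B=\{t\in T: t\pi^{k}\in T\text{ for all }k\ge 0\}$; then, using finite generation, it manufactures a finite generating set for $S$ lying inside $B$ together with the $\pi$-fixed part of $F$, and by careful bookkeeping with short products shows $T\subseteq B$, whence $S=T\pi=B\pi=B\subseteq T$, a contradiction. No congruences or residual finiteness appear. You instead pass to the maximal residually finite quotient $S/\rho$, use Mal'cev-type hopficity there to obtain $\ker\phi\subseteq\rho$, and conclude once $\rho$ saturates $T$; the entire weight falls on your fact~(ii). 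Here you can be less tentative than you are: that a subsemigroup of finite Rees index in a finitely generated semigroup has finite syntactic index---equivalently, is saturated by a finite-index congruence---is precisely the main theorem of Ru\v{s}kuc--Thomas~\cite{Rick}, and the present paper's own invocation of \cite[Corollary~4.5]{Rick} is a consequence of it. So your proof is valid by citation and conceptually cleaner; what it gives up is self-containment, since the proof of (ii) in \cite{Rick} is itself a substantial piece of Rees-index combinatorics, comparable in effort to the paper's direct argument here. Your ``first observations'' paragraph is correct but plays no role and can be dropped.
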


\begin{proof}
Suppose to the contrary that $T\phi=S$.
Let $F=S\setminus T$, a finite set.

\begin{claim}
\label{lemma3.3}
There exists $N\geq 1$ such that for every $f\in F$ at least one of the following holds:
$f\phi^{tN}\in T$ for all $t\geq 1$, or
$f\phi^N=f\phi^{2N}$.
\end{claim}

\begin{proof}
Consider the following two sets:
\begin{eqnarray*}
&&F_\infty=\{ f\in F \st f\phi^k\in F \mbox{ for infinitely many } k\geq 1\},\\
&&F_0=\{f\in F \st f\phi^k\in F \mbox{ for only finitely many } k\geq 1\},
\end{eqnarray*}
which clearly partition $F$.
The following assertions follow easily from finiteness of $F$:
First of all, for every $f\in F_\infty$ its orbit $O(f)=\{ f\phi^k\st k\geq 0\}$ is finite.
Then $O(F_\infty)=\bigcup_{f\in F_\infty} O(f)$ is finite too, and so there exists $p\geq 1$ such that
$\phi^p\rrestriction_{O(F_\infty)}$ is an idempotent.
Finally, there exists $q\geq 1$ such that $F_0\phi^k\subseteq T$ for all $k\geq q$.
Any number $N$ which is greater than $q$ and is a multiple of $p$ will satisfy the conditions of the claim.
\end{proof}

Let us denote the mapping $\phi^N$ by $\pi$.
From the assumption that $T\phi=S$ it follows that $T\pi=S$ as well.
For every $k\geq 0$ let
\begin{equation}
\label{eq1a}
A_k=F\pi^{-k} ,
\end{equation}
and let
\begin{equation}
\label{eq1}
B=T\setminus \bigcup_{k\geq 0} A_k =\{ t\in T\st t\pi^k\in T \mbox{ for all } k\geq 0\}.
\end{equation}
From \eqref{eq1} and $T\pi=S$ it is immediately clear that $B$ is a subsemigroup of $T$ (or possibly empty) and that $B\pi=B$.
Furthermore, from Claim \ref{lemma3.3} it follows that for every $f\in F$ we have:
\begin{equation}
\label{eq2}
f\pi\in B \mbox{ or } f\pi^2=f\pi\in F.
\end{equation}
From \eqref{eq1a}, \eqref{eq1}, \eqref{eq2} it follows that
\begin{equation}
\label{eq3}
A_k\pi^l\subseteq  B\cup F \ (l\geq k \geq 0).
\end{equation}

We now start using finite generation of $S$. We remark that by \cite[Theorem 1.1]{Nik} 
this is equivalent to $T$ being finitely generated.

\begin{claim}
\label{lemma3.4}
There exists a finite set $Y\subseteq B$ such that the set $Y\cup F$ generates $S$.
\end{claim}

\begin{proof}
Let $X$ be any finite generating set for $T$.
Since $X$ is finite there must exist $k\geq 0$ such that $X\subseteq B\cup A_1\cup\dots\cup A_k$.
Note that $X\cup F$ generates $S$.
Since $\pi$ is onto, the set $(X\cup F)\pi^k$ also generates $S$.
But, using \eqref{eq3}, we have $(X\cup F)\pi^k\subseteq B\cup F$.
\end{proof}

Let $U=\{ f\in F\st f\pi=f\}$; clearly $U=F\cap F\pi$ by \eqref{eq2}.
Note that
$$
(Y\cup F)\pi=Y\pi \cup (T\cap F\pi) \cup (F\cap F\pi)=Z\cup U,
$$
where
$Z=Y\pi \cup (T\cap F\pi)\subseteq B$.
So we have:

\begin{claim}
\label{lemma3.5}
There exists a finite set $Z\subseteq B$ such that $Z\cup U$ generates $S$.
\end{claim}

Now let $V=T\cap (U^2\cup U^3)$.

\begin{claim}
\label{lemma3.6}
We have $F\cap \langle U\rangle = U$ and $T\cap \langle U\rangle=\langle V\rangle\subseteq B$.
\end{claim}

\begin{proof}
For the first assertion, let $f\in F\cap \langle U\rangle$, and write $f=u_1\cdots u_k$ for some
$u_1,\ldots,u_k\in U$. Then 
$$f\pi=(u_1\pi)\cdots (u_k\pi)=u_1\cdots u_k=f$$
and so $f\in U$. Therefore 
$F\cap\langle U\rangle\subseteq U$, 
and the converse inclusion is obvious.

For the second assertion, let $t\in T\cap \langle U\rangle$, and write again 
$t=u_1\cdots u_k$ with $u_1,\dots,u_k\in U$.
Choose this product so that $k$ is as small as possible.
Obviously, $k\geq 2$.
We prove that $t\in \langle V\rangle$ by induction on $k$.
For $k=2,3$ we have $t\in V$.
Suppose now $k\geq 4$. Note that both $u_1u_2$ and $u_3\cdots u_k$ must belong to $T$ 
by the first assertion and minimality of $k$.
Thus $u_1u_2\in V$ and, by induction, $u_2\cdots u_k\in\langle V\rangle$,
so that $t\in \langle V\rangle$ as well, as required.
\end{proof}

In what follows, as a technical convenience, we will take $1$ to denote an identity element adjoined to $S$,
for any set $X\subseteq S$ write $X^1=X\cup\{1\}$, and adopt convention that $1\pi=1$.

\begin{claim}
\label{lemma3.7}
Let $n\geq 0$ be arbitrary. Every element $s\in S$ can be represented in the form
\begin{equation}\label{theform}
u_1w_1u_2w_2\cdots u_{k}w_ku_{k+1},
\end{equation}
where $k\geq 0$, $u_1,u_{k+1}\in U^1$, $u_i\in U$ for $i=2,\dots,k$, and $w_i\in \langle Z\pi^n\cup V\rangle$
for $i=1,\dots,k$.
\end{claim}

\begin{proof}
First of all note that since $Z\cup U$ is a generating set for $S$
and $U\pi=U$, we have that $Z\pi^n\cup U=(Z\cup U)\pi^n$ is a
generating set for $S$. 
By Claim \ref{lemma3.6} a product of generators from $U$ of length greater than $1$ can be 
replaced either by a single element from $U$, or by a product of elements from $V$.
\end{proof}

\begin{claim}
\label{lemma3.8}
For any $u_1,u_2\in U^1$ and $w\in Z\cup V$ there exists $n\geq 1$ such that $u_1 (w\pi^k) u_2 \in U\cup B$ for all $k\geq n$.
\end{claim}

\begin{proof}
From $F\pi\cap F= U$ and $F\pi\cap T\subseteq B$ it follows that $F\pi\subseteq U\cup B$.
Combining this with \eqref{eq3}, we see that for every $s\in S$ there exists $n\geq 1$ such that $s\pi^k\in B\cup F$
for all $k\geq n$. Applying this to $s=u_1wu_2$, and remembering that $u\pi^k=u$ for all $u\in U^1$,
yields the result.
\end{proof}

Note that since $V\subseteq \langle U\rangle$, every element of $V$ is fixed by $\pi$.
The sets $U^1$, $Z$ and $V$ are all finite, and so  Claim \ref{lemma3.8} implies that
there exists $M\geq 1$ such that
\begin{eqnarray}
\label{eq5}
&& T\cap\{u_1wu_2:u_1,u_2\in U^1,~w\in Z\pi^M\cup V\}\subseteq B\\
\label{eq6}
&& F\cap\{u_1wu_2:u_1,u_2\in U^1,~w\in Z\pi^M\cup V\}\subseteq U.
\end{eqnarray}

Let us now consider an arbitrary element $t\in T$.
Write $t$ in the form~\eqref{theform}
with $w_i\in \langle Z\pi^M\cup V\rangle$.
Furthermore, choose this decomposition so that 
the length of the corresponding product of generators $U\cup V \cup Z\pi^M$ is as short as possible.
Consider now an arbitrary $w_i$, $i=1,\dots,k-1$. Suppose that its shortest expression as a product of generators
from $Z\pi^M \cup V$ starts with $a\in Z\pi^M\cup V$, and write $w_i=aw_i^\prime$.
From \eqref{eq5}, \eqref{eq6} it follows that $u_ia\in U\cup B$.
But we cannot have $u_ia\in U$, as that would allow us to shorten the expression for $t$.
Hence $u_ia\in B$, and since $w_i^\prime\in  \langle Z\pi^M\cup V\rangle^1\subseteq B^1$,
it follows that $u_iw_i\in B$ for all $i=1,\dots,k-1$.
A similar argument shows that $u_k w_ku_{k+1}\in B$; one just needs to consider both the first and the last factors of $w_k$.
This implies that $t\in B$, and hence $T=B$. But then $S=T\pi=B\pi=B=T$, a contradiction as $T$ is a proper subsemigroup of $S$.
This completes the proof of Theorem \ref{thm1}.
\end{proof}

\section{The proof of the Main Theorem}
\label{sec_proof}

Let $S$ be a finitely generated semigroup, and let $T$ be a hopfian subsemigroup of finite index.
Suppose $\phi : S\rightarrow S$ is a surjective endomorphism of~$S$.

Let $F=S\setminus T$.
Since $\phi$ is onto, for every $k\geq 0$ we must have  
$T\phi^k\supseteq S\setminus F\phi^k$, and so $T\phi^k$ is a cofinite subsemigroup
of $S$; moreover we have $|S\setminus T\phi^k|\leq |F\phi^k|\leq |F|$. 
By~\cite[Corollary 4.5]{Rick}, a finitely generated semigroup has only finitely many cofinite subsemigroups
of any given complement size.
Therefore
there exist $k,r\geq 1$ such that $T\phi^k=T\phi^{k+r}$, 
and hence $T\psi=T\psi^2$, where $\psi=\phi^{(k+1)r}$. 

From $T\psi^2=T\psi$ it follows that $(T\cup T\psi)\psi=T\psi$.
Since $\psi$ is onto, we must have
$$
(S\setminus (T\cup T\psi))\psi\supseteq S\setminus (T\cup T\psi)\psi=S\setminus T\psi.
$$
Now we have
$$
|S\setminus T\psi| \geq |S\setminus (T\cup T\psi)|\geq |(S\setminus (T\cup T\psi))\psi|
\geq |S\setminus T\psi|,
$$
and hence $S\setminus T\psi = S\setminus (T\cup T\psi)$, which in turn implies $T\subseteq T\psi$.

Thus $\psi$ is a surjective endomorphism of $T\psi$, and $T$ is a subsemigroup of finite index mapping onto the whole of $T\psi$.
By Theorem \ref{thm1}, $T$ cannot be a proper subsemigroup, and hence $T=T\psi$.
Thus $\psi\rrestriction_T$ is a surjective endomorphism of $T$, and, since $T$ is hopfian,
$\psi\rrestriction_T$ is actually an automorphism.
Since $\psi$ is a surjection on $S$ and $T\psi=T$, it follows that $F\subseteq F\psi$.
Since $F$ is a finite set it follows that $F\psi=F$, and that $\psi\rrestriction_F$ is a bijection.
Thus $\psi$ as a whole is bijective, and hence so is $\phi$ since $\psi=\phi^{(k+1)r}$.
The Main Theorem has been proved.

\section{A concluding example}
\label{sec_concludeex}

The purpose of this section is to exhibit an example which demonstrates that hopficity is not inherited by cofinite subsemigroups even in the finitely generated case. This is accomplished in Theorem \ref{thm2} at the end of the section.

The construction relies on the notion of S-acts (or actions). 
All actions will be on the right, and to distinguish them from the semigroup operations we will denote the result
of the action of a semigroup element $s\in S$ on an element $x\in X$ by $x\cdot s$.
An $S$-act $X$ can, of course, be viewed as an algebraic structure in its own right, 
with every $s\in S$ inducing a unary operation $x\mapsto x\cdot s$ on $X$.
Therefore the standard algebraic notions -- substructures, homomorphisms, generation, hopficity -- are all meaningful in this context. For a systematic introduction into the semigroup actions see for instance \cite[Section 8.1]{Howie}.

Our first result is well known, but since we have not been able to locate an explicit example in the literature, we give one here for completeness.

\begin{lemma}
\label{lemma1}
The free semigroup of rank $3$ admits a cyclic non-hopfian act.
\end{lemma}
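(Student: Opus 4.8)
The plan is to build a cyclic $S$-act $X$ over the free semigroup $S$ of rank $3$, with generating set $\{x,y,z\}$, and a surjective-but-not-injective $S$-endomorphism of $X$. The guiding analogy is the classical non-hopfian group $\langle a,b \mid a^{-1}b^{2}a = b^{3}\rangle$ and, more directly, the non-hopfian semigroup $T^{1}$ from Proposition \ref{example1}: one wants an act whose ``shape'' admits a shift map collapsing one layer onto another while remaining onto. Concretely, I would take the underlying set of $X$ to be something like $\mathbb{N}\cup\{\ast\}$ (or $\mathbb{N}_{0}$ with $0$ playing the role of a sink), pick a cyclic generator, say the element $1$, and define the actions of the three free generators $x,y,z$ so that: one of them, say $z$, acts as the ``successor'' $n\mapsto n+1$ (so $1\cdot z^{k}=k+1$, giving cyclicity from the single generator $1$); another, say $y$, acts as a partial ``predecessor'' that fixes or kills the bottom element; and $x$ is used to witness that the generator is hit — e.g. arrange $(\text{something})\cdot x = 1$. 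Since $S$ is free of rank $3$, there is no relation to check: \emph{any} choice of three unary maps on $X$ defines a valid $S$-act, so the only constraints are (i) the act is generated by the single element $1$, and (ii) the candidate endomorphism is well defined, onto, and not one-to-one.

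The endomorphism $\theta\colon X\to X$ I would try is the downward shift $n\mapsto n-1$ for $n\ge 1$, sending the generator $1\mapsto \ast$ (or $1\mapsto 0$), and fixing $\ast$. To be an $S$-endomorphism it must commute with each of the unary operations coming from $x,y,z$; because $S$ is free, it suffices to check commutation with the three generating maps individually, which reduces to a finite, purely combinatorial verification once the three maps are pinned down — this is where the actions of $x$ and $y$ must be designed with care so that $\theta$ intertwines them (the shift should conjugate each generator's action to itself, up to the collapse at the bottom). Surjectivity of $\theta$ is immediate from $n\mapsto n-1$ hitting everything; non-injectivity comes from $1$ and $\ast$ (or $1$ and $2$, depending on the setup) having the same image. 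The remaining point is cyclicity of $X$: I must check that starting from $1$ and applying words in $x,y,z$ one reaches every element, which is exactly what the ``successor'' generator $z$ guarantees for the $\mathbb{N}$-part, with $x$ or $y$ mopping up the extra point $\ast$.

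The main obstacle I anticipate is the simultaneous balancing act: the downward shift $\theta$ wants each generator to act ``translation-invariantly'', but then the bottom of the act is special, and one must be sure that (a) the collapse $\theta$ performs at the bottom is consistent with the generators' actions there, and (b) one still has enough generators acting ``upward'' to keep $X$ cyclic while keeping $\theta$ non-injective. Rank $3$ (rather than $2$) is presumably exactly what gives the elbow room — one generator for going up, one for coming down in a way compatible with the shift, and one extra to handle the exceptional point and secure cyclicity — so the plan's success hinges on distributing these three roles so that all of ``cyclic'', ``$\theta$ is an $S$-map'', ``$\theta$ onto'', and ``$\theta$ not injective'' hold at once; verifying these four points for the explicit $X$ is then routine finite bookkeeping.
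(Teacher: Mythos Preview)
Your plan contains a genuine obstruction that you flag but do not resolve, and in fact it cannot be resolved within the framework you describe. You want one generator, say $z$, to act as the successor $n\mapsto n+1$ on the $\mathbb{N}$-part of $X$ (this is what is supposed to give cyclicity from the element $1$), and you want $\theta$ to be the downward shift, sending $1\mapsto\ast$ and fixing $\ast$. These two requirements are incompatible. Commutation $\theta(p\cdot z)=\theta(p)\cdot z$ at $p=1$ gives $\theta(2)=\theta(1)\cdot z=\ast\cdot z$; since $\theta(2)=1$ you are forced to set $\ast\cdot z=1$. Commutation at $p=\ast$ then gives $\theta(\ast\cdot z)=\theta(\ast)\cdot z$, that is $\theta(1)=\ast\cdot z$, that is $\ast=1$, a contradiction. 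More conceptually: every self-map of $\mathbb{N}_0$ that commutes with the floor-predecessor $n\mapsto\max(n-1,0)$ is of the form $n\mapsto\max(n-k,0)$ and hence never increases its argument; three such maps can only move an element downwards, so the act they generate from any single point is finite. The extra generator does not buy you the elbow room you hope for, because the constraint is imposed by $\theta$ alone, not by any relation among the generators.

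The paper escapes this by indexing the spine by $\mathbb{Z}$ rather than $\mathbb{N}$: two of the three free generators act as $i\mapsto i+1$ and $i\mapsto i-1$ on a doubly infinite row $\{x_i:i\in\mathbb{Z}\}$, so the left-shift endomorphism is a \emph{bijection} on this row and there is no bottom element to cause trouble; cyclicity still holds because the two-sided shift reaches every $x_i$ from $x_0$. The non-injectivity is then engineered in a second layer hanging below the spine, reached via the third generator: one installs a feature that is present for $i>0$ and absent for $i\le 0$, so that the left shift collapses exactly one pair at the boundary $i=1$. Thus the correct division of labour among the three generators is ``shift right'', ``shift left'', and ``drop to the lower layer'', rather than ``go up'', ``go down'', and ``handle the exceptional point''.
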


\begin{proof}
Let $F=\langle a,b,c \:|\: \rangle$ be the free semigroup of rank $3$.
Consider the action of generators $a,b,c$ on the set
$$
X=\{ x_i,y_i \st i\in \mathbb{Z}\}\cup \{ z_i\st i\in\mathbb{N}\}\cup \{0\}
$$
given by
\begin{eqnarray*}
&& x_i\cdot a =x_{i+1},\\
&& x_i \cdot b=x_{i-1},\\
&& x_i\cdot c = y_i,\\
&& y_i\cdot a =y_i\cdot b= 0,\\
&& y_i\cdot c =\left\{ \begin{array}{ll} y_i & \mbox{if } i\leq 0,\\
                                                            z_i & \mbox{if } i>0,
                                   \end{array}\right.
\\
&& z_i\cdot a=z_i\cdot b=0,\\
&& z_i\cdot c= z_i,\\
&& 0\cdot a=0\cdot b=0\cdot c=0.
\end{eqnarray*}
This action is shown in Figure \ref{fig1}. Since $F$ is free on $a,b,c$, this action extends to a unique action of $F$ on $X$.
Clearly, this action is generated by $x_0$ (or, indeed, any $x_i$).

Let $\psi : X\rightarrow X$ be defined by
\begin{eqnarray*}
&& x_i\psi= x_{i-1},\\
&& y_i\psi=y_{i-1},\\
&& z_1\psi=y_0,\\
&& z_i\psi=z_{i-1}\ (i>1),\\
&& 0\psi=0. 
\end{eqnarray*}
Effectively, $\psi$ moves all of $x_i,y_i,z_i$ one to the left, except for $z_1$ which it maps to $y_0$, the same as $y_1$.
It is a routine matter to verify that $\psi$ is a surjective, non-injective endomorphism of $X$.
\end{proof}

\begin{figure}
\psset{unit=1mm}
\psset{linewidth=0.3}
\begin{pspicture}(0,5)(120,60)


\pscircle*(30,50){1.5}
\pscircle*(45,50){1.5}
\pscircle*(60,50){1.5}
\pscircle*(75,50){1.5}
\pscircle*(90,50){1.5}

\pscircle*(30,35){1.5}
\pscircle*(45,35){1.5}
\pscircle*(60,35){1.5}
\pscircle*(75,35){1.5}
\pscircle*(90,35){1.5}

\pscircle*(75,20){1.5}
\pscircle*(90,20){1.5}

\psset{arrowsize=1.5}
\psset{arrowlength=1.5}
\psset{arrowinset=0.2}

\multirput*(0,0)(15,0){4}{\psline{->}(31,51)(44,51)}

\multirput*(0,0)(15,0){4}{\psline{<-}(31,49)(44,49)}

\multirput*(0,0)(15,0){5}{\psline{->}(30,48.5)(30,36.5)}

\psline{->}(75,33.5)(75,21.5)
\psline{->}(90,33.5)(90,21.5)

\multirput(0,0)(15,0){3}{\psarc{->}(30,31){3}{100}{80}}
\psarc{->}(75,16){3}{100}{80}
\psarc{->}(90,16){3}{100}{80}

\multirput(0,0)(2,0){3}{\pscircle*(15,50){0.3}}
\multirput(0,0)(2,0){3}{\pscircle*(15,35){0.3}}
\multirput(0,0)(2,0){3}{\pscircle*(95,50){0.3}}
\multirput(0,0)(2,0){3}{\pscircle*(95,35){0.3}}
\multirput(0,0)(2,0){3}{\pscircle*(95,20){0.3}}

\multirput*(0,0)(15,0){4}{\rput(37.5,53){$a$}}
\multirput*(0,0)(15,0){4}{\rput(37.5,47){$b$}}
\multirput*(0,0)(15,0){5}{\rput(27,42.5){$c$}}
\multirput*(0,0)(15,0){3}{\rput(30,25){$c$}}
\rput(72,27.5){$c$}
\rput(87,27.5){$c$}
\rput(75,10){$c$}
\rput(90,10){$c$}

\rput(30,54){$x_{-2}$}
\rput(45,54){$x_{-1}$}
\rput(60,54){$x_{0}$}
\rput(75,54){$x_{1}$}
\rput(90,54){$x_{2}$}

\rput(25,35){$y_{-2}$}
\rput(40,35){$y_{-1}$}
\rput(56,35){$y_{0}$}
\rput(71,35){$y_{1}$}
\rput(86,35){$y_{2}$}

\rput(71,20){$z_{1}$}
\rput(86,20){$z_{2}$}

\end{pspicture}

\caption{A non-hopfian action of $F$ on $X$. The arrows not shown all point to $0$.}
\label{fig1}
\end{figure}
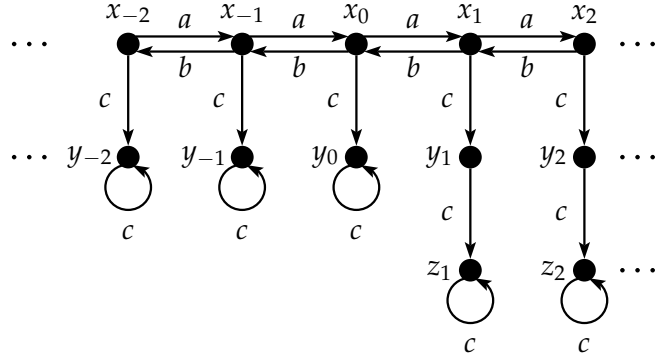

\begin{lemma}
\label{lemma2}
Let $F$ be a free semigroup, and let $X$ be a cyclic $F$-act.
Then there exists an $F$-act $Y$ such that the following hold:
\begin{enumerate}[label=\textup{(\roman*)},partopsep=0mm,topsep=2mm,itemsep=1mm,partopsep=0mm]
\item
\label{Y1}
$X$ is a subact of $Y$;
\item
\label{Y2}
$|Y\setminus X| =1$;
\item
\label{Y3}
$Y$ is hopfian.
\end{enumerate}
\end{lemma}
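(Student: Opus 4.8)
The plan is to obtain $Y$ by adjoining to $X$ a single new point $\infty$ which is forced to be an indecomposable generator of the whole of $Y$; an act of this shape turns out to be hopfian for an essentially trivial reason. Concretely, fix a generator $x_0$ of the cyclic act $X$, so that $X=\{x_0\}\cup\{x_0\cdot s \st s\in F\}$, let $A$ be a free generating set for $F$, and fix some $a_0\in A$. Set $Y=X\cup\{\infty\}$ with $\infty$ a new symbol, and extend the $F$-action to $Y$ by declaring $\infty\cdot a=x_0$ for every $a\in A$; since $F$ is free on $A$, prescribing one transformation of $Y$ for each $a\in A$ determines a unique $F$-act structure on $Y$, and since the action on $X$ is unchanged, $X$ remains a subact. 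This gives \ref{Y1} and \ref{Y2} at once.

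Next I would record two facts about $Y$. First, $Y$ is cyclic, generated by $\infty$: indeed $x_0=\infty\cdot a_0$, so every element of $X$ is of the form $\infty\cdot a_0$ or $\infty\cdot(a_0 s)$, and hence $Y=\{\infty\}\cup\{\infty\cdot s \st s\in F\}$. Second, $\infty$ is the \emph{unique} indecomposable element of $Y$, i.e.\ the only element not of the form $y\cdot s$ with $y\in Y$ and $s\in F$. On the one hand $Y\cdot F=(X\cdot F)\cup(\{\infty\}\cdot F)\subseteq X$, so $\infty\notin Y\cdot F$. On the other hand, any $t\in X$ is either $x_0=\infty\cdot a_0\in Y\cdot F$, or of the form $x_0\cdot s\in X\cdot F\subseteq Y\cdot F$ for some $s\in F$; either way $t$ is decomposable in $Y$.

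For \ref{Y3}, let $\theta$ be a surjective endomorphism of $Y$ and choose $y\in Y$ with $y\theta=\infty$. If $y=z\cdot s$ for some $z\in Y$ and $s\in F$, then $\infty=(z\theta)\cdot s\in Y\cdot F$, which is impossible; hence $y$ is indecomposable, so $y=\infty$ and thus $\infty\theta=\infty$. Since $Y=\{\infty\}\cup\{\infty\cdot s \st s\in F\}$, for every $t\in Y$ we have either $t=\infty$, whence $t\theta=\infty=t$, or $t=\infty\cdot s$ for some $s\in F$, whence $t\theta=(\infty\theta)\cdot s=\infty\cdot s=t$. So $\theta=\mathrm{id}_Y$, which is an automorphism, and $Y$ is hopfian; in fact its only surjective endomorphism is the identity.

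The argument is short enough that there is no real obstacle, but the single point that needs attention is making $\infty$ the \emph{unique} indecomposable element of $Y$ rather than merely \emph{an} indecomposable element: the act $X$ may itself already possess an indecomposable element (necessarily $x_0$, since $X$ is cyclic), and one must ensure it ceases to be indecomposable in $Y$. Routing $x_0$ through the new point via $x_0=\infty\cdot a_0$ takes care of this automatically.
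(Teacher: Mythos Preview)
Your proof is correct and follows essentially the same approach as the paper: adjoin a single new point $\infty$ (the paper calls it $y_0$) with $\infty\cdot a=x_0$ for every free generator $a$, observe that $Y\cdot F=X$ so $\infty$ is the unique indecomposable element, and conclude that any surjective endomorphism fixes $\infty$ and hence is the identity. Your write-up is in fact slightly more explicit than the paper's, particularly the closing remark explaining why $x_0$ loses its indecomposability in $Y$.
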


\begin{proof}
Suppose that $F=\langle A\:|\: \rangle$, and suppose $X$ is generated by $x_0$, i.e. $x_0\cdot\nobreak F^1=\nobreak X$.
Let $Y=X\cup\{y_0\}$, where $y_0\not\in X$.
Extend the action of $F$ on $X$ to an action on $Y$ by setting
$$
y_0\cdot a = x_0 \ (a\in A).
$$
Assertions \ref{Y1} and \ref{Y2} are clear.
To verify that $Y$ is hopfian, let $\psi : Y\rightarrow Y$ be any surjective endomorphism.
Since $Y\cdot F=X$ (i.e. $y_0$ is the only element of $Y$ which has no arrows coming into it)
it follows that $x\psi\neq y_0$ for all $x\in X$. This, combined with $\psi$ being onto, implies $y_0\psi=y_0$.
Since the $F$-act $Y$ is generated by $y_0$, it readily follows that $\psi$ must be the identity mapping, and so $Y$ is indeed hopfian.
\end{proof}

We now introduce a semigroup construction that we then use to build our desired example.
The ingredients for the construction are a semigroup $S$ and an $S$-act $X$ (with $S\cap X=\emptyset$).
The new semigroup, which we denote by $S[X]$, has the carrier set $S\cup X$; the multiplication in $S$ remains the same, while
for $s\in S$, $x,y\in X$ we define
$$
sx=x,\ xs=x\cdot s,\ xy=y.
$$
It is a routine matter to check that $S[X]$ is indeed a semigroup.

\begin{lemma}
\label{lemma3}
Let $S$ be a semigroup, let $X,Y$ be two $S$-acts, and let $\psi : X\rightarrow Y$ be a homomorphism of $S$-acts.
Define a mapping $\phi :  S[X]\rightarrow S[Y]$ by
$\phi= 1_S\cup \psi$, where $1_S$ is the identity mapping on $S$.
Then $\phi$ is a (semigroup) homomorphism.
Moreover, $\phi$ is surjective (respectively injective, bijective) if and only if $\psi$ is surjective (resp. injective, surjective).
\end{lemma}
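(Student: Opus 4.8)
The plan is to check directly that $\phi = 1_S \cup \psi$ respects the multiplication of $S[X]$, by running through the finitely many cases in the definition of the product, and then to transfer the surjectivity/injectivity/bijectivity statements from $\psi$ to $\phi$ using the fact that $\phi$ acts as the identity on $S$ and as $\psi$ on $X$.

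First I would verify that $\phi$ is a homomorphism. Let $p, q \in S[X] = S \cup X$; there are four cases. If $p, q \in S$, then $pq$ is computed in $S$ and $\phi$ restricts to the identity there, so $(pq)\phi = pq = (p\phi)(q\phi)$. If $p = s \in S$ and $q = x \in X$, then $pq = sx = x$ in $S[X]$, so $(pq)\phi = x\psi$; on the other hand $(p\phi)(q\phi) = s(x\psi)$, and since $x\psi \in Y$ the product $s(x\psi)$ in $S[Y]$ equals $x\psi$ by definition. If $p = x \in X$ and $q = s \in S$, then $pq = xs = x\cdot s$, so $(pq)\phi = (x\cdot s)\psi$; and $(p\phi)(q\phi) = (x\psi)s = (x\psi)\cdot s$ in $S[Y]$, and these agree because $\psi$ is a homomorphism of $S$-acts, i.e. $(x\cdot s)\psi = (x\psi)\cdot s$. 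Finally if $p = x, q = y \in X$, then $pq = xy = y$, so $(pq)\phi = y\psi = (x\psi)(y\psi)$ since the product of two elements of $Y$ in $S[Y]$ is the second one. Hence $\phi$ is a homomorphism.

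Next I would settle the three equivalences. Since $S[X] = S \sqcup X$ and $S[Y] = S \sqcup Y$ as sets, and $\phi$ maps $S$ bijectively onto $S$ (via $1_S$) and $X$ into $Y$ (via $\psi$), the image of $\phi$ is $S \cup X\psi$. Thus $\phi$ is surjective iff $X\psi = Y$, i.e. iff $\psi$ is surjective; $\phi$ is injective iff $\psi$ is injective (injectivity on $S$ is automatic and there is no collision between $S$ and $Y$); and combining these, $\phi$ is bijective iff $\psi$ is bijective. (I note that the ``surjective'' at the very end of the statement in the bijective clause is evidently a typo for ``bijective''.)

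There is essentially no obstacle here: the lemma is a routine bookkeeping verification, and the only thing to be careful about is keeping the four product cases straight and invoking the $S$-act homomorphism property of $\psi$ in exactly the one case where it is needed.
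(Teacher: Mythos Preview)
Your proof is correct and follows essentially the same approach as the paper: the paper's proof also checks the four product cases $st$, $sx$, $xs$, $xy$ directly (displayed as a chain of equalities rather than in prose), and then dismisses the surjectivity/injectivity/bijectivity equivalences as obvious. Your observation about the typo in the statement is also apt.
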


\begin{proof}
For $s,t\in S$ and $x,y\in X$ we have
\begin{eqnarray*}
&&
(st)\phi=st=(s\phi)(t\phi),\\
&&
(sx)\phi=x\phi=x\psi=s(x\psi)=(s\phi)(x\phi),\\
&&
(xs)\phi=(x\cdot s)\phi=(x\cdot s)\psi=(x\psi)\cdot s=(x\phi)s=(x\phi)(s\phi),\\
&&
(xy)\phi=y\phi=y\psi=(x\psi)(y\psi)=(x\phi)(y\phi).
\end{eqnarray*}
The final three assertions are obvious.
\end{proof}

\begin{lemma}
\label{lemma4}
Let $F=\langle A\:|\:\rangle$ be a free semigroup of finite rank, let $X$ be an $F$-act, and suppose that
$\phi : F[X]\rightarrow F[X]$ is a surjective endomorphism. Then:
\begin{enumerate}[label=\textup{(\roman*)},partopsep=0mm,topsep=2mm,itemsep=1mm,partopsep=0mm]
\item
\label{4i}
$\phi\rrestriction_F$ is an automorphism of $F$.
\item
\label{4ii}
$X\phi=X$.
\end{enumerate}
\end{lemma}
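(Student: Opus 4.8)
The plan is to recover the free basis $A$ of $F$ as an intrinsic feature of the semigroup $F[X]$, and then to exploit surjectivity of $\phi$ together with the finiteness of $A$. The feature in question is indecomposability: I claim the indecomposable elements of $F[X]$ (those which cannot be written as a product of two elements) are exactly the elements of $A$. Indeed, every $x\in X$ is decomposable, since $x=ax$ for any $a\in A$; and inspecting the multiplication in $F[X]$ one sees that a product of two of its elements lies in $F$ only if both factors already lie in $F$, so an element of $F$ is decomposable in $F[X]$ precisely when it is decomposable in $F$, i.e. precisely when it is not a free generator.

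For \ref{4i}, I would argue as follows. Since $\phi$ is surjective, every $a\in A$ equals $w\phi$ for some $w\in F[X]$; if $w$ were decomposable then $a$ would be too, hence $w\in A$. Thus $A\subseteq A\phi$, and since $A$ is finite this forces $A\phi=A$ with $\phi\rrestriction_A$ a bijection of $A$. As $F$ is generated by $A$, it follows at once that $F\phi=F$ and that $\phi\rrestriction_F$ agrees with the automorphism of $F$ determined by the permutation $\phi\rrestriction_A$ of the free generators; in particular $\phi\rrestriction_F$ is an automorphism of $F$.

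For \ref{4ii}, I would first check $X\phi\subseteq X$: given $x\in X$, pick any $a\in A$, so that $ax=x$ in $F[X]$ and hence $(a\phi)(x\phi)=x\phi$. Here $a\phi\in A\subseteq F$, so if $x\phi$ were in $F$ this would be a relation $x\phi=(a\phi)(x\phi)$ inside the free semigroup $F$, which is impossible by counting lengths; therefore $x\phi\in X$. Finally, using that $\phi$ is onto together with $F\phi=F$ and $F\cap X=\emptyset$, the equality $F\cup X=(F\cup X)\phi=F\phi\cup X\phi=F\cup X\phi$ forces $X\phi=X$.

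The steps are all short; the only place calling for a little care is the identification of the indecomposable elements of $F[X]$, which rests on the observation that a product of two elements of $F[X]$ can land in $F$ only when both factors are in $F$ — everything else then follows mechanically from surjectivity and the finiteness of the generating set $A$.
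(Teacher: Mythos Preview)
Your argument is correct, and it takes a genuinely different route from the paper's. The paper first observes that every element of $X$ is idempotent while $F$ is idempotent-free, giving $X\phi\subseteq X$ at once; it then splits $A$ into $A_F=\{a\in A: a\phi\in F\}$ and $A_X=\{a\in A: a\phi\in X\}$, uses that $X$ is an ideal to see $\langle A_F\rangle\phi=F$, and concludes $A_F=A$ by a rank comparison, finally invoking hopficity of the free semigroup $F$ to upgrade the resulting surjection to an automorphism. Your approach instead singles out $A$ intrinsically as the set of indecomposable elements of $F[X]$ and uses surjectivity plus finiteness of $A$ to pin down $A\phi=A$ via a pigeonhole step; this makes $\phi\rrestriction_F$ an automorphism on the nose (as the extension of a permutation of the free basis) without a separate appeal to hopficity of $F$, and you then deduce $X\phi\subseteq X$ from $A\phi\subseteq F$ and a length count. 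The paper's idempotent observation is marginally quicker for establishing $X\phi\subseteq X$, but your indecomposability argument is arguably cleaner overall, and it has the small advantage of identifying the automorphism explicitly rather than inferring its existence.
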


\begin{proof}
\ref{4i}
All elements of $X$ are idempotents, while $F$ has no idempotents;
hence $X\phi\subseteq X$.
Since $\phi$ is onto it follows that $F\subseteq F\phi$.
Now let
\begin{eqnarray*}
&&
A_F=\{ a\in A\st a\phi\in F\},\\
&&
A_X=\{ a\in A\st a\phi\in X\}.
\end{eqnarray*}
Since $X$ is an ideal of $F[X]$, it follows that $(F^1A_XF^1)\phi\subseteq X$.
Again, since $\phi$ is onto we must have $\langle A_F\rangle\phi=F$.
But $\langle A_F\rangle$ is a free subsemigroup of $F$ of rank $|A_F|$.
Since $A$ is finite it follows that $A_F=A$ and $A_X=\emptyset$.
Hence $\phi\rrestriction_F$ is a surjective endomorphism of $F$, and indeed an
automorphism since $F$ is hopfian.

\ref{4ii}
We have already proved $X\phi\subseteq X$. The assertion now follows from $F\phi=F$ and $\phi$ being
surjective.
\end{proof}

\begin{lemma}
\label{lemma5}
Let $F$ be a free semigroup of finite rank, and let $X$ be an $F$-act.
Suppose $\phi : F[X]\rightarrow F[X]$ is a surjective endomorphism with $\phi\rrestriction_F=1_F$.
Then $\phi\rrestriction_X$ is a surjective endomorphism of the $F$-act $X$.
\end{lemma}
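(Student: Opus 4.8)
The plan is to peel the statement into two parts: (a) $\phi\rrestriction_X$ is a well-defined self-map of $X$ which is onto, and (b) $\phi\rrestriction_X$ respects the $F$-action, i.e. it is a homomorphism of $F$-acts. Part (a) is already essentially done for us: the hypotheses of Lemma \ref{lemma4} are satisfied (we have a surjective endomorphism of $F[X]$ with $F$ free of finite rank), so Lemma \ref{lemma4}\ref{4ii} gives $X\phi=X$. In particular $\phi$ restricts to a surjection of $X$ onto itself, and the only thing left to check is part (b).

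For part (b) I would simply unwind the definition of the multiplication in $F[X]$. Fix $x\in X$ and $s\in F$. By construction of $F[X]$ the action is realised by the semigroup product, $x\cdot s=xs$, so
$$
(x\cdot s)\phi=(xs)\phi=(x\phi)(s\phi)=(x\phi)s,
$$
using that $\phi$ is a semigroup homomorphism and $\phi\rrestriction_F=1_F$. Now $x\phi\in X$ (again by Lemma \ref{lemma4}, or just because all elements of $X$ are idempotent while $F$ has none), and for an element of $X$ and an element of $F$ the product in $F[X]$ is by definition the action, so $(x\phi)s=(x\phi)\cdot s$. Hence $(x\cdot s)\phi=(x\phi)\cdot s$ for all $x\in X$, $s\in F$, which is exactly the statement that $\phi\rrestriction_X$ is an $F$-act endomorphism. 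Together with the surjectivity from part (a) this finishes the proof.

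I do not expect a genuine obstacle here; this lemma is really a bookkeeping step isolating the act-endomorphism $\phi\rrestriction_X$ out of the semigroup-endomorphism $\phi$. The only point requiring a moment's care is the appeal to Lemma \ref{lemma4} to guarantee $X\phi\subseteq X$ (equivalently, that $\phi$ cannot carry act elements into the semigroup part), because without that the rewriting $(x\phi)s=(x\phi)\cdot s$ would not be licensed. Once that is in hand, everything else is a one-line calculation from the defining relations of $F[X]$.
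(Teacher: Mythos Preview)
Your proposal is correct and matches the paper's proof essentially line for line: the paper also invokes Lemma~\ref{lemma4}\ref{4ii} for $X\phi=X$ and then carries out the identical chain $(x\cdot s)\phi=(xs)\phi=(x\phi)(s\phi)=(x\phi)s=(x\phi)\cdot s$.
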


\begin{proof}
By Lemma \ref{lemma4} \ref{4ii} we have that $\phi\rrestriction_X$ maps $X$ onto itself.
Furthermore, for $x\in X$ and $s\in F$, we have
$$
(x\cdot s)\phi\rrestriction_X=(xs)\phi=(x\phi)(s\phi)=(x\phi)s=(x\phi\rrestriction_X)\cdot s,
$$
i.e. $\phi\rrestriction_X$ is an $F$-act endomorphism.
\end{proof}

\begin{lemma}
\label{lemma6}
Let $F$ be a free semigroup of finite rank, and let $X$ be an $F$-act.
The semigroup $F[X]$ is hopfian if and only if $X$ is a hopfian $F$-act.
\end{lemma}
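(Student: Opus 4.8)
The plan is to prove both implications by exploiting the preceding structural lemmas, which have already done most of the work of decomposing a surjective endomorphism of $F[X]$ into its action on $F$ and its action on $X$. I would begin with the easier direction: suppose $X$ is a hopfian $F$-act, and let $\phi:F[X]\to F[X]$ be a surjective endomorphism. By Lemma \ref{lemma4}\ref{4i}, $\phi\rrestriction_F$ is an automorphism of $F$; since $F$ is free of finite rank, any automorphism permutes a free generating set, so there is an inner-type correction, but more directly one simply composes with the inverse automorphism. Precisely, let $\alpha$ be the automorphism of $F$ given by $\phi\rrestriction_F$, and extend $\alpha^{-1}$ to an automorphism of $F[X]$ by letting it act as the identity on $X$ (this is a homomorphism by the same routine check as in Lemma \ref{lemma3}, or can be seen directly). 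Then $\phi' = \phi \circ (\alpha^{-1}\cup 1_X)$ is a surjective endomorphism of $F[X]$ with $\phi'\rrestriction_F = 1_F$. Wait --- one must be slightly careful about the direction of composition and whether $\alpha^{-1}$ extends to $F[X]$ when $X$ is viewed over $F$ via $\alpha$; the clean way is to note that precomposing the $F$-action on $X$ by $\alpha$ gives an isomorphic $F$-act, so this is harmless. By Lemma \ref{lemma5}, $\phi'\rrestriction_X$ is then a surjective $F$-act endomorphism of $X$, hence an automorphism since $X$ is hopfian. It follows that $\phi'$ is bijective on $F[X]$, and therefore so is $\phi$.

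For the converse, suppose $F[X]$ is hopfian, and let $\psi:X\to X$ be a surjective $F$-act endomorphism; I want to show $\psi$ is injective. Apply Lemma \ref{lemma3} with $Y=X$: the mapping $\phi = 1_F\cup\psi : F[X]\to F[X]$ is a semigroup endomorphism, and it is surjective precisely because $\psi$ is. Since $F[X]$ is hopfian, $\phi$ is bijective, and hence $\psi = \phi\rrestriction_X$ is injective. This gives that $X$ is a hopfian $F$-act.

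I expect the main (and essentially only) obstacle to be the bookkeeping in the first direction: reducing a general surjective endomorphism $\phi$ of $F[X]$ to one that fixes $F$ pointwise, so that Lemma \ref{lemma5} applies. The subtlety is that composing $\phi$ with the extension of $\alpha^{-1}$ requires knowing that $\alpha^{-1}$ (an automorphism of the free semigroup $F$) extends to an automorphism of $F[X]$ that is compatible with the $F$-act structure on $X$ --- this works because $F$ is free, so an automorphism of $F$ is determined by an arbitrary bijection of its free generators, and one can always transport the $F$-act structure along it. Once this reduction is in place the rest is immediate from Lemmas \ref{lemma4} and \ref{lemma5}. Everything else --- surjectivity being preserved under the constructions, the identification $\phi\rrestriction_X = \psi$ --- is routine and follows directly from Lemma \ref{lemma3}.
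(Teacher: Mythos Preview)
Your argument for the direction $F[X]$ hopfian $\Rightarrow$ $X$ hopfian is correct and is exactly what the paper does.

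The other direction has a genuine gap. The map $\alpha^{-1}\cup 1_X$ is \emph{not} in general an endomorphism of $F[X]$: for $x\in X$ and $s\in F$ one has $(xs)(\alpha^{-1}\cup 1_X)=x\cdot s$, whereas $(x(\alpha^{-1}\cup 1_X))(s(\alpha^{-1}\cup 1_X))=x\cdot (s\alpha^{-1})$, and these differ unless $\alpha$ happens to act trivially on the action. Your proposed fix --- that ``precomposing the $F$-action on $X$ by $\alpha$ gives an isomorphic $F$-act'' --- is false in general. For instance, take $F$ free on $\{a,b\}$, let $X=\{p,q\}$ with $a$ acting as the identity and $b$ sending $p\mapsto q$, $q\mapsto q$, and let $\alpha$ swap $a$ and $b$; the twisted act has $a$ acting nontrivially, so no bijection of $X$ can intertwine the two actions. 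Consequently there is no evident way to manufacture an automorphism of $F[X]$ that undoes $\alpha$ on $F$, and the reduction to Lemma~\ref{lemma5} via composition does not go through.

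The paper circumvents this entirely by observing that $\mathrm{Aut}(F)\cong S_r$ is finite (automorphisms of a free semigroup of finite rank $r$ permute a free basis), so some power $(\phi\rrestriction_F)^n$ is the identity on $F$. One then applies Lemma~\ref{lemma5} to $\phi^n$ rather than to $\phi$: this gives that $(\phi\rrestriction_X)^n$ is a surjective $F$-act endomorphism of $X$, hence injective by hopficity of $X$, and therefore $\phi$ itself is injective. This ``pass to a power'' trick is the missing idea; once you insert it, your outline becomes the paper's proof.
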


\begin{proof}
($\Rightarrow$)
Suppose $F[X]$ is hopfian, and let $\psi : X\rightarrow X$ be a surjective endomorphism of $F$-acts.
Using Lemma \ref{lemma3}, there is a surjective endomorphism $\phi : F[X]\rightarrow F[X]$ such that
$\phi\rrestriction_X=\psi$.
Since $F[X]$ is hopfian, $\phi$ is injective, and hence $\psi$ is injective as well.

($\Leftarrow$)
Suppose $X$ is a hopfian $F$-act, and let $\phi : F[X]\rightarrow F[X]$ be a surjective endomorphism.
By Lemma \ref{lemma4}, the mapping $\phi\rrestriction_F$ is an automorphism of $F$.
Since the automorphism group of $F$ is isomorphic to the finite symmetric group $S_r$ (where $r$ is the rank of $F$),
there exists $n\in\mathbb{N}$ such that $(\phi\rrestriction_F)^n=1_F$.
By Lemma \ref{lemma5}, applied to the mapping $\phi^n$, we have that $(\phi\rrestriction_X)^n$ is a surjective endomorphism
of the $F$-act $X$.
But $X$ is hopfian, and hence $(\phi\rrestriction_X)^n$ is injective.
It follows that $\phi\rrestriction_X$, and indeed $\phi$ itself, are injective, and so $F[X]$ is hopfian.
\end{proof}

\begin{thm}
\label{thm2}
There exists a finitely generated hopfian semigroup $S$ which contains a non-hopfian subsemigroup $T$ with $|S\setminus T|=1$.
\end{thm}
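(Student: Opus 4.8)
The plan is to assemble the four preceding lemmas into a single package. Take $F=\langle a,b,c\:|\:\rangle$ to be the free semigroup of rank $3$, and let $X$ be the cyclic non-hopfian $F$-act furnished by Lemma \ref{lemma1}. Feeding this $X$ into Lemma \ref{lemma2} produces an $F$-act $Y$ with $X$ a subact, $|Y\setminus X|=1$, and $Y$ hopfian; recall from the proof of that lemma that $Y=X\cup\{y_0\}$ with $y_0\cdot a=y_0\cdot b=y_0\cdot c=x_0$, where $x_0$ generates $X$. I would then set $S=F[Y]$ and $T=F[X]$.

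Next I would check that this pair does the job. First, $T$ is a subsemigroup of $S$: its carrier $F\cup X$ sits inside $F\cup Y$, and because $X$ is a \emph{subact} of $Y$ the products $xs=x\cdot s$ (as well as $sx=x$, $xy=y$, and products within $F$) computed in $F[X]$ agree with those computed in $F[Y]$. Second, $|S\setminus T|=|(F\cup Y)\setminus(F\cup X)|=|Y\setminus X|=1$. Third, $S=F[Y]$ is hopfian by Lemma \ref{lemma6}, since $Y$ is a hopfian $F$-act. Fourth, $T=F[X]$ is \emph{not} hopfian, again by Lemma \ref{lemma6}, since $X$ is a non-hopfian $F$-act.

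It remains to see that $S$ is finitely generated. The free semigroup $F$ is generated by $\{a,b,c\}$, and the $F$-act $Y$ is cyclic, generated by $y_0$ (indeed $x_0=y_0a$ and every element of $X$ is of the form $x_0\cdot w=y_0(aw)$ for some $w\in F^1$). Hence in the semigroup $F[Y]$ every element of $F\cup Y$ lies in the subsemigroup generated by the finite set $\{a,b,c,y_0\}$, so $S$ is finitely generated, which completes the construction.

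As for difficulty: essentially all the mathematical content has already been extracted into Lemmas \ref{lemma1}--\ref{lemma6}, so what is left here is bookkeeping. The only points requiring a moment's care are the verification that $F[X]\leq F[Y]$ really does follow from $X$ being a subact of $Y$ (it does, since the $S[-]$ construction depends on the act only through its action maps), and the observation that cyclicity of $Y$ as an $F$-act transfers to finite generation of $F[Y]$ as a semigroup; neither presents a genuine obstacle.
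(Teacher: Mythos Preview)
Your proposal is correct and follows essentially the same route as the paper: take the cyclic non-hopfian $F$-act $X$ from Lemma~\ref{lemma1}, extend it to the hopfian act $Y$ via Lemma~\ref{lemma2}, set $S=F[Y]$, $T=F[X]$, and read off all the required properties from Lemma~\ref{lemma6} together with the finite generation of $S$ by $\{a,b,c,y_0\}$. Your write-up in fact spells out a couple of points (that $F[X]\leq F[Y]$ because $X$ is a subact, and why cyclicity of $Y$ yields finite generation of $F[Y]$) that the paper leaves implicit.
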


\begin{proof}
Let $F=\langle a,b,c \:|\: \rangle$ be the free semigroup of rank $3$, and let $X$ be a cyclic, non-hopfian $F$-act, guaranteed by Lemma \ref{lemma1}.
Extend $X$ to a cyclic hopfian $F$-act $Y$ with $|Y\setminus X|=1$, as in Lemma \ref{lemma2}.
Let $S=F[Y]$, $T=F[X]$.
Clearly, $T\leq S$ and $|S\setminus T|=1$.
By Lemma \ref{lemma6} we have that $S$ is hopfian, while $T$ is not.
Finally, $S$ is finitely generated: indeed $S=\langle a,b,c,y_0\rangle$, where $y_0$ is any generator of the $F$-act $Y$.
\end{proof}

\section{Concluding remarks}
\label{remarks}

If we perform the construction as described in the proof of Theorem \ref{thm2}, starting from the act exhibited in
Lemma \ref{lemma1} it is relatively easy to see that the resulting semigroups $S$ and $T$ are not finitely presented.
In fact, it seems unlikely that this construction will ever give a finitely presented example. This leaves open the question of 
\textit{existence of a finitely presented hopfian semigroup $S$ containing a non-hopfian subsemigroup $T$ of finite Rees index.}
Analogous question where $S$ is required to have a \textit{finite confluent noetherian rewriting system} (see \cite{BookOtto}) is also
of interest, especially in the light of \cite{wongta}.

Although Rees index is in many ways analogous to the group-theoretic index, it obviously does not generalise the latter.
A viable common generalisation -- Green index -- has recently been proposed in \cite{Bob}.
This leads us to ask: \textit{Is it true that if a finitely generated semigroup $S$ has a hopfian subsemigroup $T$ of finite Green index then $S$ itself must be hopfian?}
If the answer is positive, the proof of this fact would most likely incorporate elements of both Hirshon's original argument, and our 
considerations in Sections \ref{sec_reesendo}, \ref{sec_proof}.

We close the paper with the following open problem, which in fact
stimulated the first author to think about hopficity:
\textit{Is hopficity decidable for one relation semigroups (or one relator groups)?}
This problem appears to be quite difficult at present.
In fact, even the seemingly
 easier related question of \emph{deciding residual finiteness} is still open. 
The reader should recall a
classical theorem of Malcev \cite{Malcev} (see also \cite[Theorem IV.4.10]{Lyndon}) that a finitely generated residually
finite group (or semigroup) is hopfian, and consult \cite{Lallement} and~\cite{Sapir} for some relevant information.
We finish by showing that
this question is not vacuous, and filling in an obvious and surprising gap in the literature:

\begin{example}
The  one relation semigroup $S=\mathrm{Sg}\langle a,b \:|\: abab^2ab=b\rangle$ is non-hopfian.
To verify this, first note that
\begin{equation*}
abab^3=abab^2\cdot abab^2ab=abab^2ab\cdot ab^2ab=bab^2ab.
\end{equation*}
It easy to check that the rewriting system $\{abab^2ab\to b, abab^3\to
bab^2ab\}$ is confluent and noetherian and so defines $S$. Notice
that
\begin{eqnarray*}
a\cdot bab\cdot a\cdot (bab)^2\cdot a\cdot bab &=& abab\cdot abab^2ab\cdot abab\\
&\to& abab^2ab\cdot ab\\
&\to& bab.
\end{eqnarray*}
This means that the assignment $a\mapsto a$, $b\mapsto bab$ lifts
to an endomorphism $\phi$ of $S$. Since $a\cdot bab\cdot bab=b$, the endomorphism
is onto. Under $\phi$ we obviously have $ab^2\mapsto b$ and
so $ab^2a^2b^2=ab^2\cdot a\cdot ab^2\mapsto bab$. But, by our
rewriting system, $ab^2a^2b^2\neq b$ and so $\phi$ is not bijective. 
\end{example}

\begin{flushleft}
School of Mathematics and Statistics\\
University of St Andrews\\
St Andrews KY16 9SS\\
Scotland, U.K.\\
\smallskip
\texttt{victor@mcs.st-and.ac.uk, nik@mcs.st-and.ac.uk}
\end{flushleft}

\end{document}